\definecolor{darkblue}{rgb}{0.0,0.0,0.84}
\theoremstyle{plain}
\newtheorem{theorem}{Theorem}
\newtheorem*{theorem*}{Theorem}
\newtheorem{lemma}[theorem]{Lemma}
\newtheorem{proposition}[theorem]{Proposition}
\newtheorem*{proposition*}{Proposition}
\newtheorem*{corollary*}{Corollary}
\theoremstyle{definition}
\newtheorem{remark}[theorem]{Remark}
\newtheorem{example}[theorem]{Example}
\renewcommand{\Im}{\operatorname{Im}}
\renewcommand{\Re}{\operatorname{Re}}
\DeclareMathOperator{\Tr}{Tr}
\newcommand{\Z}{\mathbb{Z}}
\newcommand{\sbar}{\overline{s}}
\newcommand{\calO}{\mathcal{O}}
\newcommand{\Zeven}{Z_D^{\textup{even}}}
\newcommand{\Zodd}{Z_D^{\textup{odd}}}
\newcommand{\Zboth}{Z_D}
\newcommand{\chid}{\chi_{4D}}
\title{The Fibonacci Zeta Function and Continuation}
\author[Assaf]{Eran Assaf}
\thanks{EA was supported by Simons Collaboration Grant (550029, to Voight).}
\author[Kuan]{Chan Ieong Kuan}
\thanks{CIK was supported in part by NSFC (No.\ 11901585).}
\author[Lowry-Duda]{David Lowry-Duda}
\thanks{DLD was supported by the Simons Collaboration in Arithmetic Geometry, Number
Theory, and Computation via the Simons Foundation grant 546235.}
\author[Walker]{Alexander Walker}
\thanks{AW was supported by the Additional Funding Programme for Mathematical Sciences,
delivered by EPSRC (EP/V521917/1) and the Heilbronn Institute for Mathematical Research.}
\date{Last compiled: \today}
\begin{document}

\begin{abstract}
  We introduce a family of Dirichlet series associated to real quadratic number fields
  that generalize the ordinary Fibonacci zeta function $\sum F(n)^{-s}$, where $F(n)$ denotes the
  $n$th Fibonacci number.
  We then give three different methods of meromorphic continuation to $\mathbb{C}$.
  Two are purely analytic and classical, while the third uses shifted convolutions and
  modular forms.
\end{abstract}

\maketitle

\section{Introduction and Context}%
\label{sec:intro}

The Dirichlet series $\sum_{n \geq 1} a(n)/n^s$ attached to a sequence $\{ a(n)
  \}_{n \geq 1}$ of polynomial growth encodes arithmetic qualities of the
sequence within the analytic properties in $s$.
A generic Dirichlet series will converge in some half-plane $\Re s > \sigma$.
For distinguished sequences, the associated Dirichlet series will have meromorphic
continuation to a larger region, possibly all of $\mathbb{C}$.
This occurs most famously with the Riemann zeta function, when $a(n) \equiv 1$,
as well as Dirichlet $L$-functions, when $a(n) = \chi(n)$ for a Dirichlet
character $\chi$.

In~\cite{egami1999curious, navas2001fibonacci}, Egami and Navas independently studied the
zeta function $Z_{\mathrm{Fib}}(s)$ associated to the indicator function of the Fibonacci numbers
$\{ 1, 1, 2, 3, \ldots \}$,
given by
\begin{equation}
  Z_{\mathrm{Fib}}(s)
  :=
  \sum_{n \geq 1} \frac{1}{F(n)^s}
  =
  1 + 1 + \frac{1}{2^s} + \frac{1}{3^s} + \cdots
\end{equation}
As $F(n) \asymp \varphi^n$ where $\varphi = \frac{1 + \sqrt{5}}{2}$, it is clear that
$Z_{\mathrm{Fib}}(s)$ converges absolutely for $\Re s > 0$.
Egami and Navas both show that $Z_{\mathrm{Fib}}$ has meromorphic continuation to all
of $\mathbb{C}$, with a half-lattice of poles in the region $\Re s \leq 0$.

\begin{theorem*}[Egami~\cite{egami1999curious} and Navas~\cite{navas2001fibonacci}, independently]
  The Fibonacci zeta function $Z_{\mathrm{Fib}}(s)$ has meromorphic continuation to all of
  $\mathbb{C}$, given explicitly by
  \begin{equation}\label{eq:egami_navas}
    Z_{\mathrm{Fib}}(s)
    =
    5^{\frac{s}{2}}
    \sum_{k \geq 0}
    {-s \choose k}
    \frac{1}{\varphi^{s + 2k} + (-1)^{k+1}}.
  \end{equation}
  The function $Z_{\mathrm{Fib}}(s)$ has simple poles at $s = -2k + \frac{(2n + k)\pi i}{\log \varphi}$
  for $k, n \in \mathbb{Z}$, $k \geq 0$, and is otherwise holomorphic.
\end{theorem*}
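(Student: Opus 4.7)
The plan is to derive the explicit formula~\eqref{eq:egami_navas} directly from Binet's formula and the generalized binomial theorem, then read off the meromorphic continuation and the pole locations from the resulting series.

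First I would invoke Binet's formula $F(n) = (\varphi^n - \psi^n)/\sqrt{5}$, where $\psi = -\varphi^{-1}$, to factor
$$F(n) = \frac{\varphi^n}{\sqrt{5}}\bigl(1 - (-1)^n \varphi^{-2n}\bigr),$$
so that $F(n)^{-s} = 5^{s/2}\varphi^{-ns}\bigl(1 - (-1)^n\varphi^{-2n}\bigr)^{-s}$. Since $|(-1)^n \varphi^{-2n}| \leq \varphi^{-2} < 1$ for $n \geq 1$, the last factor expands by the generalized binomial theorem as $\sum_{k\geq 0}\binom{-s}{k}(-1)^{k(n+1)}\varphi^{-2nk}$. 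Summing over $n \geq 1$, swapping the order of summation (justified below), and collapsing the resulting geometric series in $n$ to $1/(\varphi^{s+2k} + (-1)^{k+1})$ produces~\eqref{eq:egami_navas} in the half-plane $\Re s > 0$.

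Second, I would argue that the right-hand side of~\eqref{eq:egami_navas} defines a meromorphic function on all of $\mathbb{C}$. On any disk $|s| \leq R$, the binomial coefficient $\binom{-s}{k}$ is polynomially bounded in $k$ (uniformly in $s$), while $|\varphi^{s+2k} + (-1)^{k+1}| \geq \varphi^{2k - R} - 1$ once $k$ is large enough. The series therefore converges absolutely and uniformly on compact subsets of the disk once finitely many pole neighborhoods are excised, giving the meromorphic continuation to $\mathbb{C}$ and agreeing with $Z_{\mathrm{Fib}}(s)$ in the region of initial convergence by the identity theorem.

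Third, I would locate the poles. The $k$-th summand is meromorphic with simple poles exactly where $\varphi^{s+2k} = (-1)^k$; taking logarithms yields $s = -2k + (2n+k)\pi i/\log\varphi$ for $n \in \mathbb{Z}$. Poles from different $k$ have distinct real parts $-2k$, so they do not interfere; and the zeros of $\binom{-s}{k}$, occurring only at $s \in \{0, -1, \ldots, -(k-1)\}$, never meet these pole locations when $k \geq 1$, while for $k = 0$ the coefficient is identically $1$. Hence every claimed pole is simple and no others appear. The main obstacle I anticipate is the bookkeeping around the summation swap and the uniform convergence of the tail: both reduce to an estimate of the form $|\binom{-s}{k}| \leq C_R \, k^{D(R)}$ on $|s| \leq R$, paired with the $\varphi^{-2k}$ decay from the denominator. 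Once this estimate is in hand, the remainder of the argument is a direct geometric-series computation followed by residue bookkeeping.
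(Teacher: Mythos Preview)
Your proposal is correct and follows essentially the same binomial-series approach the paper uses in \S\ref{sec:binomial}: expand the Binet expression via the generalized binomial theorem, collapse the inner sum to a geometric series, and read off the poles term by term. The only cosmetic difference is that the paper first separates odd- and even-indexed terms (because it is working in the generality of $\calO_D$) and then recombines them in~\eqref{eq:generalized_egami_navas}, whereas you handle all $n$ at once; the underlying mechanism and the resulting formula are identical.
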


Kamano~\cite{kamano2013analytic} proved an analogous continuation for the zeta function associated to the Lucas sequence $\{ 2, 1, 3, 4, 7, \ldots\}$.
Several mathematicians have since studied zeta functions related to the Fibonacci zeta
function.
The recent work of Smajlovi\'{c}, \v{S}abanac, and
\v{S}\'{c}eta~\cite{smajlovic2022tribonacci} on \emph{Tribonacci} zeta functions includes a
good overview of recent papers in their introduction.

In this paper, we introduce a more general family of zeta functions with similar
properties.
For a positive square-free integer $D$, let $\calO_D$ denote the
ring of integers in $\mathbb{Q}(\sqrt{D}) \subset \mathbb{R}$.
Let $\varepsilon > 1$ denote the fundamental unit in $\calO_D$.
We define the $\calO_D$ Lucas and Fibonacci sequences in terms of
traces of the fundamental unit,
\begin{align}
  L_{D}(n) & := \Tr_{\calO_D}(\varepsilon^n)             %
  \label{align:lucas_def}%
  \\
  F_{D}(n) & := \Tr_{\calO_D}(\varepsilon^n / \sqrt{q}), %
  \label{align:fib_def}%
\end{align}
where $q = D$ if $D \equiv 1 \bmod 4$ and otherwise $q = 4D$. (Thus $F(n) = F_5(n)$.)

In this article, we give three distinct proofs of the following theorem.

\begin{theorem}\label{thm:main}
  Suppose that the fundamental unit $\varepsilon$ satisfies $N(\varepsilon) = -1$.
  Then the zeta function associated to the odd-indexed terms and the zeta
  function associated to the even-indexed terms of the $\calO_D$ Fibonacci
  sequence,
  \begin{align}
    \Zodd(s)
     & :=
    \sum_{n \geq 1}
    \frac{1}{F_D(2n - 1)^s}
    =
    \sum_{n \geq 1}
    \frac{1}{\big(\Tr_{\calO_D}(\varepsilon^{2n - 1} / \sqrt{q}) \big)^s},
    \\
    \Zeven(s)
     & :=
    \sum_{n \geq 1}
    \frac{1}{F_D(2n)^s}
    =
    \sum_{n \geq 1}
    \frac{1}{\big(\Tr_{\calO_D}(\varepsilon^{2n} / \sqrt{q}) \big)^s},
  \end{align}
  have meromorphic continuation to $\mathbb{C}$, with a half-lattice of poles in the region
  $\Re s \leq 0$.
\end{theorem}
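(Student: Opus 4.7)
The plan is to adapt the binomial-series method of Egami and Navas to the setting of Theorem~\ref{thm:main}. Under the hypothesis $N(\varepsilon)=-1$, the nontrivial element of $\Gal(\mathbb{Q}(\sqrt D)/\mathbb{Q})$ sends $\varepsilon\mapsto -\varepsilon^{-1}$ and $\sqrt q \mapsto -\sqrt q$, which combine to yield the Binet-type identity $F_D(n) = (\varepsilon^{n} + (-1)^{n+1}\varepsilon^{-n})/\sqrt q$. Splitting by parity and pulling out the dominant power of $\varepsilon$ from the denominator then gives
\begin{equation}
F_D(2m-1)^{-s} = q^{s/2}\,\varepsilon^{-(2m-1)s}\bigl(1+\varepsilon^{-2(2m-1)}\bigr)^{-s}, \qquad F_D(2m)^{-s} = q^{s/2}\,\varepsilon^{-2ms}\bigl(1-\varepsilon^{-4m}\bigr)^{-s}.
\end{equation}

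In the half-plane of absolute convergence I would expand each factor $(1\pm x)^{-s}$ as a binomial series (valid because $\varepsilon^{-2}<1$), interchange the summation over $m$ with the expansion index $k$, and evaluate the resulting geometric series in $m$. The outcome is the pair of explicit identities
\begin{align}
\Zodd(s) &= q^{s/2}\sum_{k\geq 0}\binom{-s}{k}\frac{1}{\varepsilon^{s+2k}-\varepsilon^{-(s+2k)}}, \\
\Zeven(s) &= q^{s/2}\sum_{k\geq 0}(-1)^{k}\binom{-s}{k}\frac{1}{\varepsilon^{2(s+2k)}-1}.
\end{align}
Each individual summand is meromorphic on all of $\mathbb{C}$ with simple poles precisely at the zeros of $\varepsilon^{2(s+2k)}-1$, namely at $s = -2k + n\pi i/\log\varepsilon$ for $n\in\mathbb{Z}$; taking the union over $k\geq 0$ recovers exactly the half-lattice of poles in $\Re s \leq 0$ asserted in the theorem.

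The main technical step that remains is to show that the displayed series of meromorphic functions converge normally on compact subsets of $\mathbb{C}$ away from the pole locus, so that they genuinely define the meromorphic continuation rather than merely a formal identity valid for $\Re s$ large. This reduces to combining the exponential smallness $(\varepsilon^{2(s+2k)}-1)^{-1} = O(\varepsilon^{-4k})$ for large $k$ with the standard polynomial bound $|\binom{-s}{k}| \ll_s k^{|\Re s|}$, uniform for $s$ in a compact region. Since $\varepsilon>1$, the exponential factor dominates the polynomial growth, yielding absolute and uniform convergence on compacta; a Weierstrass argument then promotes the holomorphic limit away from the pole set to a meromorphic function on all of $\mathbb{C}$ with exactly the claimed singularities. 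I expect the mild combinatorial bookkeeping for the polynomial bound on $\binom{-s}{k}$, rather than any deep obstruction, to be the only delicate point.
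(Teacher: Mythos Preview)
Your proposal is correct and follows essentially the same approach as the paper's first proof (the binomial-series method in \S\ref{sec:binomial}): the Binet identities, the binomial expansion of $(1\pm\varepsilon^{-2n})^{-s}$, the interchange with the geometric series, and the resulting explicit formulas and pole locations all match the paper's Theorem~\ref{thm:binomial-expression} exactly (your $\Zodd$ expression is the paper's after multiplying numerator and denominator by $\varepsilon^{s+2k}$). You supply a bit more detail on the uniform convergence via the polynomial bound on $\binom{-s}{k}$ and Weierstrass, whereas the paper simply asserts absolute convergence away from poles, but there is no substantive difference in the argument.
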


As an immediate corollary, this implies that the combined zeta function $\Zboth(s) = \sum_{n \geq 1}
  F_D(n)^{-s}$ has meromorphic continuation to $\mathbb{C}$.

\begin{remark}
  The fundamental unit $\varepsilon$ satisfies $N(\varepsilon) = -1$ if and
  only if the class group and narrow class group of $\mathbb{Q}(\sqrt{D})$
  coincide. This assumption is not necessary, though it does streamline our
  presentation.
  This assumption also restricts $D$ to the congruence classes $D \equiv
    1, 2 \bmod 4$, as a unit of norm $-1$ when $D \equiv 3 \bmod 4$ would imply
  that $3$ is the sum of two squares mod $4$.
  When $N(\varepsilon) =1$, $\Zboth(s)$ may be treated using the
  techniques presented for $\Zeven(s)$.
\end{remark}

In \S\ref{sec:binomial}, we give a basic continuation using the binomial series.
This method of continuation is well-known in the literature and a version seems to appear
in every article on the Fibonacci zeta function.

In \S\ref{sec:poisson}, we give an alternate continuation using Poisson summation. We treat $\Zodd(s)$ and $\Zeven(s)$ separately, owing to additional complications in the latter case.

Proofs similar to the binomial method and the Poisson method appeared in solutions to
problem 10486 in the American Mathematical Monthly~\cite{silverman1999zeta};
Silverman proposed that readers study the continuation of
\begin{equation} \label{eq:AMM-series}
  \sum_{n \in \mathbb{Z}}
  \frac{1}{(a \alpha^{n} + b \alpha^{-n})^s},
  \qquad (a, b > 0 \, ; \, \alpha > 1).
\end{equation}
This series closely resembles the two-sided odd-indexed Fibonacci zeta
function, $\sum_{n \in \mathbb{Z}} F(2n - 1)^{-s}$.

The American Mathematical Monthly published two solutions: a solution based on binomial
expansion (due to Bradley) and a solution based on Poisson summation (due to Darling).
Silverman notes via private correspondence that he provided a proof using
binomial expansion when he first proposed the problem to the American
Mathematical Monthly in 1995.
Our treatment of $\Zodd(s)$ in~\S\ref{sec:poisson} resembles Darling's proof.
Considering $\Zeven(s)$, which corresponds to a version
of~\eqref{eq:AMM-series} with $b<0$, is more intricate and requires additional
techniques.

Finally, in \S\ref{sec:modular}, we show how to obtain the meromorphic continuation of
$\Zboth$ by interpreting the problem through modular forms.
This is a completely new proof.
Our approach is based on the observation that an integer $n$ is an $\calO_D$
Fibonacci number if and only if there is an integer solution for $X$ in the equation $X^2
  = q n^2 \pm 4$, where $q = D$ or $q = 4D$, depending on whether $d \equiv 1 \bmod 4$.
This connection to Pell-type equations is proved in Proposition~\ref{prop:pfib}.

The authors were surprised that one can prove continuation of the Fibonacci
zeta function from modular forms.
But as the proofs are distinct from the rest of the article and rather
technical, we defer the finer details to our forthcoming sequel
work~\cite{akldwFibonacciModular}.

\section*{Acknowledgements}

We would like to thank John Voight for initial guidance on a talk that grew
into this paper.
We also thank Andy Booker, Jeff Hoffstein, Min Lee, Will Sawin, and Joseph
Silverman for helpful discussion.

\section{Fibonacci and Lucas sequences from
  \texorpdfstring{$\calO_D$}{O(Q(sqrt(D)))}}%
\label{sec:pfibluc}

We now describe in greater detail the $\calO_D$ Fibonacci
and Lucas numbers defined in~\eqref{align:lucas_def}
and~\eqref{align:fib_def}.
These sequences generalize the typical Fibonacci and Lucas sequences.

\begin{example}
  Consider $D = 5$.
  Then $\varphi = \frac{1 + \sqrt{5}}{2} = \varepsilon$ is the fundamental unit in
  $\calO_{5}$ and $-1/\varphi$ is its conjugate.
  We see that $\Tr(\varepsilon^n) = (\varphi^n + (-1/\varphi)^{n})$ and
  $\Tr(\varepsilon^n/\sqrt{5}) = (\varphi^n - (-1/\varphi)^{n})/\sqrt{5}$ are the typical
  Lucas and Fibonacci numbers, respectively, given by their Binet formulas.
  Stated differently, the $\calO_{5}$ Fibonacci and Lucas sequences are the
  \emph{standard} Fibonacci and Lucas sequences.
\end{example}

More generally, given a real quadratic field $\mathbb{Q}(\sqrt{D})$ with
squarefree, positive $D$, ring of integers $\calO_D$, and fundamental unit
$\varepsilon$, the $\calO_D$ Fibonacci and Lucas sequences correspond to solutions
to the quadratic recurrence
\begin{equation}
  a(n + 2) = \Tr(\varepsilon) a(n+1) - N(\varepsilon)a(n)
\end{equation}
with different initial conditions.
This is because $F_D(n)$ and $L_D(n)$ are sums of powers of $\varepsilon$
and $\overline{\varepsilon}$, which are the roots of the characteristic equation
$X^2 - \Tr(\varepsilon) X + N(\varepsilon) = 0$.

\begin{example}
  Consider $D = 3$.
  The fundamental unit is $\varepsilon = 2 + \sqrt{3}$, which has trace $4$ and
  norm $1$.
  Then $F_3(n)$ and $L_3(n)$ satisfy the recurrence $a(n+2) = 4 a(n+1) - a(n)$.
  Starting with the term when $n = 0$,
  the $\calO_3$ Fibonacci sequence begins $\{ 0, 1, 4, 15, 56, 209, \ldots \}$
  and the $\calO_3$ Lucas sequence begins $\{ 2, 4, 14, 52, 194, 724, \ldots \}$.
\end{example}

\begin{example}
  Consider $D = 10$.
  The fundamental unit is $\varepsilon = 3 + \sqrt{10}$, which has trace $6$ and
  norm $-1$.
  Then $F_{10}(n)$ and $L_{10}(n)$ satisfy the recurrence $a(n+2) = 6 a(n+1) + a(n)$.
  The $\calO_{10}$ Fibonacci sequence begins $\{ 0, 1, 6, 37, 228, \ldots\}$
  and the $\calO_{10}$ Lucas sequence begins $\{2, 6, 38, 234, 1442, \ldots \}$.
\end{example}

%

\begin{proposition}\label{prop:pfib}
  With the definitions above, an integer $n$ is an $\calO_D$ Fibonacci number
  if and only if there is an integer solution in $X$ to the equation
  \begin{equation}\label{eq:proppfib}
    X^2 = qn^2 \pm 4,
  \end{equation}
  where $q = D$ if $D \equiv 1 \bmod 4$ and otherwise $q = 4D$.
  If in addition $N(\varepsilon) = -1$, then $n$ is an odd-indexed $\calO_D$
  Fibonacci number if and only if there is an integer solution in $X$ to the equation
  \begin{equation}
    X^2 = qn^2 - 4.
  \end{equation}
\end{proposition}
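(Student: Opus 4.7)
My plan rests on the Binet-style expressions $L_D(k) = \varepsilon^k + \overline{\varepsilon}^k$ and $\sqrt{q}\,F_D(k) = \varepsilon^k - \overline{\varepsilon}^k$, which follow directly from the trace definitions together with the fact that $\sqrt{q}$ has trace zero in $\mathbb{Q}(\sqrt{D})$ in both cases $q = D$ and $q = 4D$. Differencing the squares gives the norm identity
\begin{equation}
L_D(k)^2 - q\,F_D(k)^2 = 4(\varepsilon \overline{\varepsilon})^k = 4\,N(\varepsilon)^k,
\end{equation}
which drives both directions. For the forward implication of the main equivalence, I simply set $X = L_D(k)$ when $n = F_D(k)$; the identity then gives $X^2 - qn^2 = \pm 4$. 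Under the hypothesis $N(\varepsilon) = -1$ the right-hand side becomes $4(-1)^k$, so an odd index $k$ produces exactly the equation $X^2 = qn^2 - 4$.

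For the converse, given an integer solution to $X^2 - qn^2 = \pm 4$ I would produce a unit $\alpha \in \calO_D$ with $N(\alpha) = (X^2 - qn^2)/4 = \pm 1$, then use that the unit group of $\calO_D$ equals $\{\pm \varepsilon^k : k \in \mathbb{Z}\}$. This splits by $D \bmod 4$: if $D \equiv 1 \bmod 4$ then $q = D$ forces $X^2 \equiv n^2 \bmod 4$, hence $X \equiv n \bmod 2$, and $\alpha := (X + n\sqrt{D})/2$ lies in $\calO_D$; if $D \equiv 2, 3 \bmod 4$ then $q = 4D$ forces $X$ even, and $\alpha := X/2 + n\sqrt{D}$ lies in $\calO_D$. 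Writing $\alpha = \pm\varepsilon^k$ and computing $\alpha - \overline{\alpha} = n\sqrt{q}$ directly from the explicit formula for $\alpha$ recovers $n = \pm F_D(k)$, so $n$ is (up to sign) an $\calO_D$ Fibonacci number. Under the extra assumption $N(\varepsilon) = -1$, the sign $-4$ in the Pell-type equation forces $N(\alpha) = -1$ and hence $k$ odd, completing the odd-indexed claim.

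I expect the main obstacle, modest as it is, to lie in bookkeeping rather than in any deep input. Specifically, the parity congruence that places $\alpha$ inside $\calO_D$ in the $D \equiv 1 \bmod 4$ case must be verified case-by-case, and the overall $\pm$ sign in $\alpha = \pm \varepsilon^k$ is inherited by $n$; this last sign is absorbed by the symmetry $n \mapsto -n$ of the defining equation $X^2 = qn^2 \pm 4$ together with the Fibonacci reflection $F_D(-k) = -N(\varepsilon)^k F_D(k)$, so no genuine difficulty arises.
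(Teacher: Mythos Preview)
Your proof is correct and follows essentially the same strategy as the paper: establish the norm identity $L_D(k)^2 - qF_D(k)^2 = 4N(\varepsilon)^k$ for the forward direction, and for the converse build a unit $\alpha \in \calO_D$ from a Pell solution and invoke $\alpha = \pm\varepsilon^k$. The only cosmetic difference is that the paper parametrizes $\calO_D$ uniformly via the $\mathbb{Z}$-basis $\{1,\tfrac{q+\sqrt{q}}{2}\}$ (so that $N(\alpha)=\pm1$ becomes $b^2=qn^2\pm4$ with $b=2m+qn$ in one stroke), whereas you split into the cases $D\equiv 1$ and $D\equiv 2,3 \bmod 4$ to land $\alpha$ in $\calO_D$.
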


\begin{proof}
  Each number $\alpha$ in the ring of integers $\calO_D$ can be written uniquely
  as
  \begin{equation}
    \alpha = m + n \frac{q + \sqrt{q}}{2},
  \end{equation}
  where $m$ and $n$ are integers.
  (This differs from the standard integral basis; we refer the reader
  to~\cite[Chapter~2]{marcus2018numberfields} for an excellent general
  reference on the topics presented here.)
  The element $\alpha$ is a unit if and only if $N(\alpha) = \pm 1$, or equivalently if and only if
  \begin{equation}\label{eq:prop_equiv}
    b^2 = qn^2 \pm 4
  \end{equation}
  where $b = 2m + qn$.

  Suppose that $b$ and $n$ are positive integer solutions
  to~\eqref{eq:prop_equiv}, witnessing that $\alpha$ a unit.
  Then $\alpha = \pm \varepsilon^r$ for some integer $r$.
  Rearranging, we find that
  \begin{align}
    \pm \alpha = \frac{b + n \sqrt{q}}{2} = \varepsilon^r
     & =
    \frac{1}{2}
    \Bigl[
      (\varepsilon^r + \overline{\varepsilon}^r)
      +
      \frac{\varepsilon^r - \overline{\varepsilon}^r}{\sqrt{q}} \sqrt{q}
      \Bigr] \!
    =
    \frac{1}{2}
    \bigl[
      L_D(r) + F_D(r) \sqrt{q}
      \bigr].
  \end{align}
  Thus if $(b, n)$ is a positive solution to~\eqref{eq:prop_equiv}, then $b = L_D(r)$ and
  $n = F_D(r)$ for some $r$.

  Conversely, if $n = F_D(r)$ for some $r$, then taking norms in
  $\varepsilon^r = \frac{1}{2} \bigl[ L_D(r) + F_D(r) \sqrt{q} \bigr]$
  shows that
  \begin{equation}
    L_D(r)^2 - q F_D(r)^2 = 4 N(\varepsilon^r),
  \end{equation}
  hence $(b,n) = (L_D(r), F_D(r))$ is a solution to~\eqref{eq:prop_equiv}.

  The case when $N(\varepsilon) = -1$ follows by tracking signs of $N(\varepsilon^r) =
    (-1)^r$.
\end{proof}

\begin{remark}
  When $q = 4D$, equation~\eqref{eq:proppfib} becomes $X^2 = 4Dn^2 \pm 4$.
  Any solution in $X$ must necessarily be even, say $X = 2Y$.
  Then equation~\eqref{eq:proppfib} reduces to
  \begin{equation}
    Y^2 = Dn^2 \pm 1.
  \end{equation}
  This minor parity annoyance appears in \S\ref{sec:modular}.
  Up to this parity problem, a similar result can be shown for $\calO_D$ Lucas
  numbers.
\end{remark}

\section{Continuation via Binomial Expansion}%
\label{sec:binomial}

Our first proof of Theorem~\ref{thm:main} involves a direct application of the binomial series.
As $N(\varepsilon) = -1$ by assumption, we know that the Galois conjugate
$\overline{\varepsilon}$ equals $-1/\varepsilon$.
Hence
\begin{equation}
  F_D(2n - 1) = \frac{\varepsilon^{2n-1} + \varepsilon^{1 - 2n}}{\sqrt{q}}
  \quad \text{and} \quad
  F_D(2n) = \frac{\varepsilon^{2n} - \varepsilon^{-2n}}{\sqrt{q}}.
\end{equation}
For $\Re s > 0$, we substitute this into $\Zodd$ to produce
\begin{equation} \label{eq:Z_d_odd_sum}
  \Zodd(s)
  =
  \sum_{n \geq 1}
  \frac{q^{s/2}}
  {(\varepsilon^{2n - 1} + \varepsilon^{1 - 2n})^s}
  =
  q^{\frac{s}{2}}
  \sum_{n \geq 1}
  \varepsilon^{(2n - 1)s}
  (\varepsilon^{4n - 2} + 1)^{-s}.
\end{equation}
After expanding $(\varepsilon^{4n - 2} + 1)^{-s}$ using the binomial series and
collecting powers of $\varepsilon^{-2s-4k}$ in a geometric series (where we use
that $\varepsilon > 1$), we obtain
\begin{equation}
  \Zodd(s)
  =
  q^{\frac{s}{2}}
  \sum_{n \geq 1}
  \varepsilon^{(2n - 1)s}
  \sum_{k \geq 0}
  {-s \choose k}
  (\varepsilon^{4n - 2})^{-s-k}
  =
  q^{\frac{s}{2}}
  \sum_{k \geq 0}
  {-s \choose k}
  \frac{\varepsilon^{s + 2k}}{\varepsilon^{2s + 4k} - 1}.
\end{equation}
This sum converges absolutely for all $s$ away from poles in the summands and therefore gives a meromorphic continuation of $\Zodd(s)$ to all $s \in \mathbb{C}$. Note that $\Zodd(s)$ admits simple poles on lines
$\Re s = -2k$ for $k \in \mathbb{Z}_{\geq 0}$.
Similarly, for $\Re s > 0$, we compute
\begin{equation}\label{eq:Z_d_even_binomial}
  \Zeven(s)
  =
  \sum_{n \geq 1}
  \frac{q^{s/2}}
  {(\varepsilon^{2n} - \varepsilon^{-2n})^s}
  =
  q^{\frac{s}{2}}
  \sum_{k \geq 0}
  {-s \choose k}
  \frac{(-1)^k}{\varepsilon^{2s + 4k} - 1}.
\end{equation}
This gives meromorphic continuation and shows that $Z_D^{\textup{even}}$ has
simple poles in the same locations as $\Zodd$.
We collect this in the following theorem.

\begin{theorem} \label{thm:binomial-expression}
  The odd-indexed and even-indexed $\calO_D$ Fibonacci zeta functions
  $\Zodd(s)$ and $\Zeven(s)$ meromorphically continue to $\mathbb{C}$ via
  \begin{equation}
    \Zodd(s)
    =
    q^{\frac{s}{2}}
    \sum_{k \geq 0}
    {-s \choose k}
    \frac{\varepsilon^{s + 2k}}{\varepsilon^{2s + 4k} - 1},
    \quad
    \Zeven(s)
    =
    q^{\frac{s}{2}}
    \sum_{k \geq 0}
    {-s \choose k}
    \frac{(-1)^k}{\varepsilon^{2s + 4k} - 1}.
  \end{equation}
  Both have simple poles at $s = -2k + \frac{\pi m}{\log \varepsilon} i$ for $k
    \in \mathbb{Z}_{\geq 0}$ and $m \in \mathbb{Z}$.
\end{theorem}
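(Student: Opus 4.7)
The plan is to execute, rigorously, the formal manipulations sketched in the surrounding text, paying close attention to the absolute convergence that licenses the rearrangements. Because $N(\varepsilon) = -1$, we have $\overline{\varepsilon} = -\varepsilon^{-1}$, which yields the Binet-type identities already displayed for $F_D(2n-1)$ and $F_D(2n)$. Working in the half-plane $\Re s > 0$ where the defining series converge absolutely, I would factor out the dominant exponential inside each summand so that the task reduces to expanding $(1+\varepsilon^{-(4n-2)})^{-s}$ for $\Zodd$ and $(1-\varepsilon^{-4n})^{-s}$ for $\Zeven$ via the binomial series $(1 \pm x)^{-s} = \sum_{k \geq 0} \binom{-s}{k}(\pm x)^k$. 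Both expansions converge termwise since $\varepsilon > 1$ forces $\varepsilon^{-(4n-2)}, \varepsilon^{-4n} \in (0,1)$ for every $n \geq 1$.

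Next, I would interchange the order of summation and collapse the inner sum over $n$, which is geometric in $\varepsilon^{-2(s+2k)}$. A direct computation produces $\varepsilon^{s+2k}/(\varepsilon^{2s+4k}-1)$ in the odd case and $(-1)^k/(\varepsilon^{2s+4k}-1)$ in the even case, the sign $(-1)^k$ arising from the $(-x)^k$ factor in the binomial expansion of $(1-x)^{-s}$. The interchange is justified by Fubini on $\Re s > 0$ via the absolute estimate $\sum_{n,k} |\binom{-s}{k}|\,\varepsilon^{-(4n-2)(\Re s + k)} < \infty$, which follows from the polynomial growth of $|\binom{-s}{k}|$ in $k$ against the exponential decay of the geometric factor.

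Finally, I would verify that each resulting sum over $k$ converges absolutely and uniformly on compact subsets of $\mathbb{C}$ that avoid the zeros of $\varepsilon^{2s+4k}-1$: for $k$ sufficiently large relative to $|s|$, the denominator grows like $\varepsilon^{4k}$ while $|\binom{-s}{k}|$ grows only polynomially in $k$, so the tail is dominated by a convergent geometric series. By the identity theorem this furnishes meromorphic continuation to all of $\mathbb{C}$. The poles arise from the zeros of the denominators: $\varepsilon^{2s+4k} = 1$ holds exactly when $(2s+4k)\log \varepsilon \in 2\pi i \mathbb{Z}$, i.e.\ at $s = -2k + \pi m i/\log \varepsilon$ for $k \in \mathbb{Z}_{\geq 0}$ and $m \in \mathbb{Z}$. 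Each such zero of $\varepsilon^{2s+4k}-1$ is simple, and for each candidate location $s_0$ only the single index $k$ with $s_0 = -2k + \pi m i/\log\varepsilon$ contributes a singularity, so the poles of $\Zodd$ and $\Zeven$ are themselves simple. I expect the only obstacle to be the absolute-convergence bookkeeping described above; the underlying algebraic computation is entirely formal.
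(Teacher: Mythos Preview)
Your proposal is correct and follows essentially the same approach as the paper: factor out the dominant exponential, apply the binomial series to the remaining factor, swap sums via absolute convergence, and collapse the resulting geometric series in $n$ to obtain the stated closed forms. You are in fact more explicit than the paper about the Fubini justification and the uniform convergence on compacta (the paper simply asserts that ``this sum converges absolutely for all $s$ away from poles in the summands''), and your argument that at each $s_0 = -2k + \pi m i/\log\varepsilon$ only the single index $k$ contributes a singularity cleanly pins down simplicity of the poles.
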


Adding $\Zodd$ and $\Zeven$ together recovers a generalization of the
continuation of Egami and Navas in~\eqref{eq:egami_navas},
\begin{equation}\label{eq:generalized_egami_navas}
  \Zboth(s)
  =
  q^{\frac{s}{2}}
  \sum_{k \geq 0}
  {-s \choose k}
  \frac{\varepsilon^{s + 2k} + (-1)^k}{\varepsilon^{2s + 4k} - 1}
  =
  q^{\frac{s}{2}}
  \sum_{k \geq 0}
  {-s \choose k}
  \frac{1}{\varepsilon^{s + 2k} + (-1)^{k+1}},
\end{equation}
where we've used that $\varepsilon^{2s + 4k} - 1 = (\varepsilon^{s + 2k} -
  1)(\varepsilon^{s + 2k} + 1)$ to simplify.

\begin{remark}
  As~\eqref{eq:generalized_egami_navas} only has simple poles at $s = -2k + i
    (2n + k) \pi / \log \varepsilon$, this shows that half of the poles of
  $Z_D^{\textup{odd}}(s)$ perfectly cancel half of the poles of
  $Z_D^{\textup{even}}(s)$.
  This may also be seen through direct computation of residues.
\end{remark}

\section{Continuation via Poisson Summation}%
\label{sec:poisson}

In this section, we describe how the meromorphic continuations of the
Fibonacci zeta functions $\Zodd(s)$ and $\Zeven(s)$ can be
obtained using Poisson summation instead of binomial series manipulations.
We separate our discussion and consider $\Zodd$ first.

\subsection{Odd-indexed Fibonacci zeta function}

Beginning with the explicit sum for $\Zodd(s)$ from~\eqref{eq:Z_d_odd_sum}, we write
\begin{equation}
  \Zodd(s)
  = \frac{q^{s/2}}{2}
  \sum_{\substack{n \in \mathbb{Z} \\ n \equiv 1 \bmod 2}}
  f_s(n),
  \qquad
  f_s(n):=(\varepsilon^{n} + \varepsilon^{-n})^{-s},
\end{equation}
in which the extension from $n > 0$ to $n \in \mathbb{Z}$ follows by symmetry.
Once $\Zodd(s)$ is recognized as a complete sum over a congruence class, we apply Poisson summation to write
\begin{align} \label{eq:Z-odd-Poisson}
  \Zodd(s) = \tfrac{1}{2} q^{s/2} \cdot
  \tfrac{1}{2} \sum_{m \in \mathbb{Z}} \widehat{f_s}(\tfrac{m}{2}) e(\tfrac{m}{2}),
\end{align}
in which $e(z) := e^{2\pi i z}$ and $\widehat{f_s}(x)$ is the Fourier transform
\begin{align} \label{eq:Fourier-transform-definition}
  \widehat{f_s}(m) := \int_{-\infty}^\infty f_s(x) e(-mx) \, dx.
\end{align}
Poisson summation is justified by absolute convergence of the relevant series and integrals when $\Re s > 0$.

The Fourier coefficients $\widehat{f_s}(m)$ have an explicit description in
terms of the classical beta function $\mathrm{B}(x,y) := \Gamma(x)\Gamma(y)/ \Gamma(x+y)$.

\begin{lemma} \label{lem:Fourier-coefficient-odd}
  For $\Re s > 0$, we have
  \[
    \widehat{f_s}(m) =
    \frac{1}{2 \log \varepsilon} \mathrm{B}(\tfrac{s}{2} + \tfrac{\pi i m}{\log \varepsilon},
    \tfrac{s}{2} - \tfrac{\pi i m}{\log \varepsilon})
    =
    \frac{\Gamma(\frac{s}{2} + \frac{\pi i m}{\log \varepsilon})
      \Gamma(\frac{s}{2} - \frac{\pi i m}{\log \varepsilon})}
    {2 \Gamma(s) \log \varepsilon}.
  \]
\end{lemma}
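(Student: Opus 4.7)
The plan is to reduce the Fourier integral defining $\widehat{f_s}(m)$ to a standard integral representation of the beta function via an exponential change of variables, and then apply the identity $\mathrm{B}(x,y) = \Gamma(x)\Gamma(y)/\Gamma(x+y)$ to finish.

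First I would rewrite $f_s(x) = (\varepsilon^x + \varepsilon^{-x})^{-s}$ by factoring out $\varepsilon^{-x}$, giving
\begin{equation}
  f_s(x) = \varepsilon^{sx} (1 + \varepsilon^{2x})^{-s},
\end{equation}
so that the Fourier transform from~\eqref{eq:Fourier-transform-definition} becomes
\begin{equation}
  \widehat{f_s}(m)
  = \int_{-\infty}^\infty \varepsilon^{sx} (1 + \varepsilon^{2x})^{-s} e^{-2\pi i m x}\, dx.
\end{equation}
For $\Re s > 0$ the integrand decays exponentially as $x \to \pm \infty$ (like $\varepsilon^{-s|x|}$), so the integral converges absolutely and the manipulations below are justified.

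Next I would apply the substitution $u = \varepsilon^{2x}$, so that $x = \log u / (2\log\varepsilon)$, $dx = du/(2u \log\varepsilon)$, and
\begin{equation}
  \varepsilon^{sx} = u^{s/2},
  \qquad
  e^{-2\pi i m x} = u^{-\pi i m / \log\varepsilon}.
\end{equation}
The integral then becomes
\begin{equation}
  \widehat{f_s}(m) = \frac{1}{2\log\varepsilon} \int_0^\infty u^{\frac{s}{2} - \frac{\pi i m}{\log\varepsilon} - 1} (1 + u)^{-s}\, du.
\end{equation}

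Finally, I would recognize this as the standard integral representation $\mathrm{B}(a,b) = \int_0^\infty u^{a-1}(1+u)^{-a-b}\, du$ with $a = \tfrac{s}{2} - \tfrac{\pi i m}{\log \varepsilon}$ and $b = \tfrac{s}{2} + \tfrac{\pi i m}{\log \varepsilon}$, both of which have positive real part when $\Re s > 0$. Using the symmetry of $\mathrm{B}$ and converting to gamma functions yields
\begin{equation}
  \widehat{f_s}(m) = \frac{1}{2\log\varepsilon} \mathrm{B}\!\left(\tfrac{s}{2} + \tfrac{\pi i m}{\log\varepsilon}, \tfrac{s}{2} - \tfrac{\pi i m}{\log\varepsilon}\right) = \frac{\Gamma(\frac{s}{2} + \frac{\pi i m}{\log\varepsilon})\Gamma(\frac{s}{2} - \frac{\pi i m}{\log\varepsilon})}{2\Gamma(s)\log\varepsilon},
\end{equation}
as desired. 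The proof is almost entirely routine; the only point needing care is verifying the absolute convergence so that both the substitution and the identification with the beta integral are unambiguous, which is immediate once one writes $f_s$ in the factored form above.
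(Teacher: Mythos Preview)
Your proof is correct and arguably more direct than the paper's. The paper multiplies $\widehat{f_s}(m)$ by $\Gamma(s) = \int_0^\infty t^s e^{-t}\,dt/t$, substitutes $z = \varepsilon^x$, rescales $t \mapsto (z + z^{-1})t$, and then changes to variables $u = tz$, $v = t/z$ so that the double integral factors into $\Gamma(\tfrac{s}{2}+\tfrac{\pi i m}{\log \varepsilon})\Gamma(\tfrac{s}{2}-\tfrac{\pi i m}{\log \varepsilon})$. You instead make the single substitution $u = \varepsilon^{2x}$ and recognise the resulting integral as the standard representation $\mathrm{B}(a,b) = \int_0^\infty u^{a-1}(1+u)^{-a-b}\,du$. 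Your route avoids the auxiliary $t$-integral and the two-variable Jacobian computation, at the modest cost of needing to know (or look up) the half-line Beta integral; the paper's route is self-contained in that it derives the Beta identity from scratch via the Gamma representation. Both arguments land at the same answer with the same convergence hypothesis $\Re s > 0$.
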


\begin{proof}
  By~\eqref{eq:Fourier-transform-definition} and the standard integral representation of $\Gamma(s)$, we have
  \begin{align}
    \Gamma(s) \widehat{f_s}(m)
     & = \int_0^\infty \int_{-\infty}^\infty t^s e^{-t} \cdot
    (\varepsilon^x + \varepsilon^{-x})^{-s} e(-mx) \, \frac{dx dt}{t}
  \end{align}
  Introducing the variable $z = \varepsilon^x$ and then mapping $t \mapsto (z+z^{-1})t$ produces
  \[
    \Gamma(s) \widehat{f_s}(m)
    =\int_0^\infty \int_{0}^\infty t^s e^{-t(z+1/z)}
    z^{-\frac{2\pi i m}{\log \varepsilon}} \, \frac{dz dt}{zt \log \varepsilon}.
  \]
  Finally, we change variable again, introducing $u=tz$ and $v=t/z$. (This gives $t= \sqrt{uv}$ and $z = \sqrt{u/v}$.)
  Since the Jacobian of this transformation is $1/2v$, we obtain
  \begin{align}
    \Gamma(s) \widehat{f_s}(m)
     & =\int_0^\infty \int_{0}^\infty u^{\frac{s}{2}} v^{\frac{s}{2}}  e^{-u-v}
    u^{-\frac{\pi i m}{\log \varepsilon}} v^{\frac{\pi i m}{\log \varepsilon}} \, \frac{dudv}{2 uv \log \varepsilon} \\
     & = \frac{1}{2 \log \varepsilon} \Gamma(\tfrac{s}{2} + \tfrac{\pi i m}{\log \varepsilon})
    \Gamma(\tfrac{s}{2} - \tfrac{\pi i m}{\log \varepsilon}),
  \end{align}
  which completes the proof.
\end{proof}

By applying Lemma~\ref{lem:Fourier-coefficient-odd}
to~\eqref{eq:Z-odd-Poisson}, we produce the following meromorphic continuation
of $\Zodd$.

\begin{theorem} \label{thm:continuation-Poisson-odd}
  For $s \in \mathbb{C}$ away from the poles of the summands,
  \[
    \Zodd(s) = \frac{q^{s/2}}{8 \Gamma(s) \log \varepsilon}
    \sum_{m \in \mathbb{Z}} (-1)^m
    \Gamma\Big(\frac{s}{2} + \frac{\pi i m}{2\log \varepsilon}\Big)
    \Gamma\Big(\frac{s}{2} - \frac{\pi i m}{2\log \varepsilon}\Big),
  \]
  Thus $\Zodd(s)$ has meromorphic continuation to $s \in \mathbb{C}$, with simple poles at $s = - 2k + \frac{\pi i m}{\log \varepsilon}$
  for $m \in \mathbb{Z}$ and integral $k \geq 0$.
\end{theorem}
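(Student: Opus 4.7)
The plan is to chain together the Poisson-summation identity~\eqref{eq:Z-odd-Poisson} and the Fourier-coefficient formula of Lemma~\ref{lem:Fourier-coefficient-odd}, then verify that the resulting gamma-product series defines a meromorphic function on $\mathbb{C}$ with the predicted pole set.

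First, I would apply Lemma~\ref{lem:Fourier-coefficient-odd} at the argument $m/2$ rather than $m$, so that the shifts $\pi i m/\log\varepsilon$ become $\pi i m/(2\log\varepsilon)$, and use $e(m/2) = (-1)^m$. Substituting into~\eqref{eq:Z-odd-Poisson} and collecting the various factors of $1/2$ and $1/(2\log\varepsilon)$ into a single prefactor $q^{s/2}/(8\Gamma(s)\log\varepsilon)$ immediately reproduces the displayed formula; this step is pure bookkeeping.

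Second, I would show that the resulting series converges absolutely and locally uniformly on compact subsets of $\mathbb{C}$ away from the listed poles. Stirling's estimate $|\Gamma(\sigma + it)| \sim \sqrt{2\pi}\,|t|^{\sigma - 1/2} e^{-\pi|t|/2}$ (uniformly for $\sigma$ in a compact set) gives, after multiplying the two conjugate gamma factors, a bound of the form $|m|^{\Re s - 1} e^{-\pi^{2}|m|/(2\log\varepsilon)}$. The exponential factor dominates and yields geometric convergence in $m$, so the series defines a meromorphic continuation of $\Zodd(s)$ to $\mathbb{C}$.

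Third, I would identify the poles. The factors $\Gamma\bigl(\tfrac{s}{2} \pm \tfrac{\pi i m}{2\log\varepsilon}\bigr)$ have simple poles exactly when their arguments lie in $\mathbb{Z}_{\leq 0}$, producing candidate poles for $\Zodd$ at $s = -2k + \pi i m'/\log\varepsilon$ with $k \in \mathbb{Z}_{\geq 0}$ and $m' \in \mathbb{Z}$. When $m' \neq 0$, exactly one summand contributes a simple pole and the factor $1/\Gamma(s)$ is nonzero, so the pole of $\Zodd$ is simple. When $m' = 0$, the $m = 0$ summand carries a \emph{double} pole from $\Gamma(s/2)^{2}$ at $s = -2k$, but the simple zero of $1/\Gamma(s)$ at those points cuts it down to a simple pole. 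At negative odd integers $s = -(2k+1)$ the zero of $1/\Gamma(s)$ meets no singularity in the sum, so $\Zodd$ is in fact holomorphic there. The only delicate step—the main obstacle, such as it is—is the $m = m' = 0$ collision, where one must confirm (for instance by cross-checking with Theorem~\ref{thm:binomial-expression}) that the double pole/simple zero cancellation leaves a genuine nonzero simple residue and not a removable singularity.
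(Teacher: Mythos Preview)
Your proposal is correct and follows essentially the same route as the paper: substitute Lemma~\ref{lem:Fourier-coefficient-odd} (evaluated at $m/2$, using $e(m/2)=(-1)^m$) into the Poisson identity~\eqref{eq:Z-odd-Poisson}, invoke Stirling for convergence, and read off the pole structure from the gamma factors together with the $1/\Gamma(s)$ prefactor. You are in fact slightly more careful than the paper about the $m'=0$ case, explicitly noting that one should check the double-pole/simple-zero collision leaves a genuine simple pole rather than a removable singularity; the paper simply asserts this and implicitly relies on the independent binomial continuation of Theorem~\ref{thm:binomial-expression} for confirmation.
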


The apparent double poles at non-positive even integers from the $m=0$ term are canceled by
$1/\Gamma(s)$, becoming simple poles as claimed.
Note that the convergence of the $m$-sum for any $s$ away
from poles follows from Stirling's approximation.

In addition, $1/\Gamma(s)$ induces a family of trivial zeros of $\Zodd(s)$ at
negative odd integers, which is not so obvious from
Theorem~\ref{thm:binomial-expression}.

\subsection{Even-Indexed Fibonacci Zeta Function}

Given the success of Poisson summation in continuing $\Zodd(s)$, we might wonder if this technique provides a meromorphic continuation in the even-indexed case. One immediate challenge is that the series
\begin{equation}\label{eq:zeven_def}
  \Zeven(s)
  := q^{\frac{s}{2}} \sum_{n \geq 1} \big( \varepsilon^{2n} - \varepsilon^{-2n} \big)^{-s}
\end{equation}
is difficult to relate to a sum over $n \in \mathbb{Z}$, in part because the corresponding $n=0$ term is formally $0^{-s}$.
A similar issue arises when attempting to understand the Riemann zeta function
via Poisson summation, as considered by Mordell in~\cite{mordell29}.
We take inspiration from Mordell and overcome this obstacle by applying Poisson
summation to a regularized sum.

A second complication arises from the fact that the summands of $\Zeven$, $(\varepsilon^{2n} - \varepsilon^{-2n})^{-s}$, are not invariant under $n \mapsto -n$ (unlike those of $\Zodd$). We overcome this obstacle by applying the following `truncated' Poisson summation formula.

\begin{lemma}[cf.~\cite{mordell29}] \label{lem:Mordell}
  Let $f(x)$ be a $C^1$ function on $[a,b]$.
  Suppose that $f''(x)$ exists on $[a,b]$ and that the integrals
  \[
    \int_a^b f(x) dx \quad \text{and} \quad
    \int_a^b \vert f''(x) \vert dx
  \]
  converge. If $a$ (respectively $b$) is infinite, suppose also that $f(x) \to
    0$, $f'(x) \to 0$ as $x$ tends to $-\infty$ (respectively $\infty$).
  Then
  \begin{align} \label{eq:partial-poisson}
    \sum_{a \leq n \leq b} f(n)
    = \lim_{M \to \infty} \sum_{\substack{ m \in \mathbb{Z} \\ \lvert m \rvert \leq M}}
    \int_a^b f(x) e^{2\pi i m x} dx,
  \end{align}
  where the endpoints $f(a), f(b)$ are counted with multiplicity $\frac{1}{2}$
  when $a,b \in \mathbb{Z}$.
\end{lemma}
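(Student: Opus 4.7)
The plan is to establish the identity first for finite integer endpoints by combining the Euler--Maclaurin formula with the Fourier expansion of the sawtooth, then to extend it to non-integer or infinite endpoints by truncation with bounds provided by two integrations by parts.

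Assume first that $a, b \in \mathbb{Z}$ are finite. Start from the Euler--Maclaurin formula
\begin{equation*}
    \sum_{n=a}^{b} f(n) - \tfrac{1}{2}\bigl(f(a)+f(b)\bigr)
    = \int_a^b f(x)\,dx + \int_a^b \bigl(\{x\} - \tfrac{1}{2}\bigr) f'(x)\,dx,
\end{equation*}
substitute the Fourier series $\{x\} - \tfrac{1}{2} = -\sum_{m \neq 0} e^{2\pi i m x}/(2\pi i m)$ (valid as a symmetric partial sum for $x \notin \mathbb{Z}$) into the second integral, and integrate by parts in each $m$-term. Since $e^{2\pi i m a} = e^{2\pi i m b} = 1$ for $a, b, m \in \mathbb{Z}$, the resulting boundary contributions collapse to $-(f(b) - f(a)) \sum_{m \neq 0} 1/(2\pi i m)$, which vanishes as a symmetric partial sum. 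What remains is $\lim_{M \to \infty} \sum_{0 < |m| \leq M} \int_a^b f(x) e^{2\pi i m x}\,dx$, and together with the $m = 0$ term $\int_a^b f(x)\,dx$ this yields the claimed identity.

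For general endpoints I would pick large integer cutoffs $A \leq B$ inside $[a,b]$ (allowing $a = -\infty$ or $b = +\infty$), apply the integer-endpoint case on $[A,B]$, and argue that the tail contributions $\lim_M \sum_{|m| \leq M} \int_a^A f(x) e^{2\pi i m x}\,dx$ and $\lim_M \sum_{|m| \leq M} \int_B^b f(x) e^{2\pi i m x}\,dx$ equal $\tfrac{1}{2} f(A)$ and $\tfrac{1}{2} f(B)$ respectively. These half-weights glue with the integer-endpoint identity on $[A,B]$ to recover the full sum $\sum_{a \leq n \leq b} f(n)$ with the endpoint conventions stated in the lemma.

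The main obstacle will be managing the double limit, in $M$ and in the cutoff, when endpoints are infinite. The key tool is to integrate by parts twice in $x$ on each tail integral, using the hypotheses $f, f' \to 0$ at infinity and $\int |f''|\,dx < \infty$ to obtain a bound of order $1/m^2$ uniformly in the cutoff. This forces absolute convergence of the $m$-sum on each tail and allows a Jordan--Dirichlet-type pointwise convergence argument (applied to $f$ extended by zero past the cutoff) to identify each limit at the jump endpoint as half the one-sided value, completing the extension.
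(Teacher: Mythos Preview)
The paper does not give its own proof of this lemma; it is stated with a citation to Mordell and then applied directly. So there is no in-paper argument to compare against.

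Your route via Euler--Maclaurin together with the Fourier expansion of the sawtooth is a standard and correct way to establish such a truncated Poisson formula, and it is close in spirit to Mordell's original treatment. The integer-endpoint case is handled cleanly as you describe: the boundary terms from integration by parts cancel symmetrically, leaving exactly the $m\neq 0$ Fourier integrals. One point in your exposition should be tightened. In the second paragraph you assert that each tail, e.g.\ $\lim_{M}\sum_{|m|\le M}\int_B^b f(x)e^{2\pi i m x}\,dx$, equals $\tfrac{1}{2}f(B)$. That is correct when $b$ is finite and $B=\lfloor b\rfloor$, so that $(B,b]$ contains no integers and a Dirichlet--Jordan argument applied to $f\cdot\mathbf{1}_{(B,b]}$ picks up half the jump at $B$. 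But when $b=+\infty$, the interval $(B,\infty)$ contains infinitely many integers and the tail is $\tfrac{1}{2}f(B)+\sum_{n>B}f(n)$, not $\tfrac{1}{2}f(B)$ alone. You evidently anticipate this in your final paragraph: the two integrations by parts, using $f,f'\to 0$ at infinity and $\int|f''|<\infty$, give a bound $O(1/m^2)$ on the tail integrals that is uniform in the cutoff $B$, and this is precisely what is needed to interchange the limit $B\to\infty$ with the symmetric $m$-sum by dominated convergence. So the strategy is sound; you should simply separate the finite non-integer endpoint case (handled by Dirichlet--Jordan on a bounded tail containing no interior integers) from the infinite endpoint case (handled by passing $B\to\infty$ in the already-established integer-endpoint identity, justified by the uniform $1/m^2$ decay), rather than phrasing both as ``the tail equals $\tfrac{1}{2}f(B)$.''
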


We consider the function $f(x) = (\varepsilon^{2x} - \varepsilon^{-2x})^{-s} -
  (4 x\log \varepsilon)^{-s}$, which grows as $O_s(x^{2-\Re s})$ as $x \to 0^+$ and
as $O_s(x^{-\Re s})$ as $x \to \infty$.
Initially we restrict attention to $s$ with $\Re s \in (1, 2)$, where $f(x)$
decays nicely as both $x \to 0^+$ and $x \to \infty$.

For $s$ with $\Re s \in (1, 2)$, we apply Lemma~\ref{lem:Mordell} to $f(x)$
over $[0,\infty]$ (corresponding to a one-sided Fourier transform) to obtain
\[
  \Zeven(s)
  = \frac{q^{s/2}\zeta(s)}{(4 \log \varepsilon)^{s}}
  + q^{\frac{s}{2}}\sum_{m \in \mathbb{Z}}
  \int_0^\infty \!\!\! \big(
  (\varepsilon^{2x} - \varepsilon^{-2x})^{-s} - (4 x\log \varepsilon)^{-s}
  \big) e^{2\pi i m x} dx.
\]
For $m \neq 0$, we have (cf.~\cite[(equation~9.1, equation~10.1, and p.~291)]{mordell29}) the integral formula
\begin{align*}
   & \int_0^\infty \!\! \big(
  (\varepsilon^{2x} - \varepsilon^{-2x})^{-s} - (4 x\log \varepsilon)^{-s}
  \big) e^{2\pi i m x} dx     \\
   & \qquad
  = \frac{\Gamma(1-s) \Gamma(\frac{s}{2} - \frac{\pi i m}{2 \log \varepsilon})}
  {4 \log \varepsilon \Gamma(1-\frac{s}{2} - \frac{\pi i m}{2 \log \varepsilon})}
  - \frac{\Gamma(1-s)e^{\frac{\pi i}{2}(1-s) \mathrm{sgn}(m)}}{(2\pi \vert m \vert)^{1-s}(4 \log \varepsilon)^s}.
\end{align*}
(In fact, this is well-behaved for $\Re s \in (0, 3)$, but we
don't yet take advantage of this.)
When $m = 0$, we introduce $z = \varepsilon^{-2x}$; the constant phase then equals
\begin{align}
  \begin{split} \label{eq:constant-Fourier-coefficient}
     & \int_0^1 \big((1/z-z)^{-s} - (-2 \log z)^{-s}\big) \frac{dz}{2 z\log \varepsilon}   \\
     & \qquad = \frac{z^s {}_2F_1(s,\frac{s}{2}, \frac{s}{2}+1, z^2)}{2s \log \varepsilon}
    + \frac{(-2\log z)^{1-s}}{4(1-s) \log \varepsilon}\bigg\vert_0^1,
  \end{split}
\end{align}
in which we've used that $\frac{d}{dz}(z^s {}_2F_1(s,\frac{s}{2}, \frac{s}{2}+1, z^2)) = s z^{s-1} (1-z^2)^{-s}$.
Since $\Re s > 1$, the lower limit in the antiderivative evaluates to $0$. For the upper limit, we apply the transformation formula~\cite[9.131(2)]{gradshteynryzhik07} to write~\eqref{eq:constant-Fourier-coefficient} as
\begin{equation}
  \lim_{z \to 1^-}
  \bigg(
  \frac{\Gamma(1-s)\Gamma(\frac{s}{2})}{4 \Gamma(1-\frac{s}{2}) \log \varepsilon}
  + \frac{z^{s+1} {}_2F_1(1, 1-\frac{s}{2}, 2-s, 1-z^2)}{4 (1-z^2)^s(s-1) \log \varepsilon}
  + \frac{(-2\log z)^{1-s}}{4 (1-s)\log\varepsilon}
  \bigg).
\end{equation}
The first term in the limit is independent of $z$, while the latter terms jointly contribute $0$ to the limit. All together, we find that
\begin{align} \label{eq:Z-even-continuation}
  \Zeven(s)
   & = \frac{q^{s/2}\zeta(s)}{(4 \log \varepsilon)^{s}}
  + \frac{q^{s/2}\Gamma(1-s)\Gamma(\frac{s}{2})}{4 \Gamma(1-\frac{s}{2}) \log \varepsilon} \\
   & \quad
  + q^{\frac{s}{2}}\sum_{m \neq 0} \bigg(
  \frac{\Gamma(1-s) \Gamma(\frac{s}{2} - \frac{\pi i m}{2 \log \varepsilon})}
  {4\Gamma(1-\frac{s}{2} - \frac{\pi i m}{2 \log \varepsilon}) \log \varepsilon}
  - \frac{\Gamma(1-s)e^{\frac{\pi i}{2}(1-s) \mathrm{sgn}(m)}}{(2\pi \vert m \vert)^{1-s}(4 \log \varepsilon)^s} \bigg)
\end{align}
for $\Re s \in (1, 2)$.
Moreover, we remark that the right-hand side
of~\eqref{eq:Z-even-continuation} converges absolutely away from poles for any $\Re s < 2$.
Combined with the obvious initial obvious half-plane of convergence $\Re s >
  0$, this gives meromorphic continuation of $\Zeven$ to $\mathbb{C}$.

This expression simplifies when $\Re s < 0$, as the two parts of the $m$-sum
in~\eqref{eq:Z-even-continuation} converge independently and can be considered separately.
The contribution of the terms involving $\lvert m \rvert^{1-s}$ is
\[
  -q^{\frac{s}{2}} \frac{\Gamma(1-s)(e^{\frac{\pi i}{2}(1-s)} + e^{-\frac{\pi i}{2}(1-s)})}
  {(2\pi)^{1-s}(4 \log \varepsilon)^s} \sum_{m=1}^\infty \frac{1}{m^{1-s}}
  = - \frac{q^{s/2} \zeta(s)}{(4 \log \varepsilon)^s},
\]
by the functional equation of the Riemann zeta function.
This perfectly cancels the first term at right in~\eqref{eq:Z-even-continuation}.
Hence the meromorphic continuation of $\Zeven(s)$ can be identified with
\begin{equation}\label{eq:Z-even-res-lt-zero}
  \Zeven(s)
  = \frac{q^{\frac{s}{2}} \Gamma(1-s)}{4 \log \varepsilon} \sum_{m \in \mathbb{Z}}
  \frac{\Gamma(\frac{s}{2} - \frac{\pi i m}{2 \log \varepsilon})}
  {\Gamma(1-\frac{s}{2} - \frac{\pi i m}{2 \log \varepsilon})}
\end{equation}
in the region $\Re s < 0$.

\begin{theorem}\label{thm:poisson_even}
  The Dirichlet series $\Zeven(s)$ has meromorphic continuation to all
  $\mathbb{C}$.
  When $\Re s > 0$, the defining sequence~\eqref{eq:zeven_def} converges.
  When $\Re s < 2$ (useful when $\Re s = 0$), the continuation is
  given by~\eqref{eq:Z-even-continuation}.
  For $\Re s < 0$, we can write the continuation as
  \begin{equation*}
    \Zeven(s)
    = \frac{q^{\frac{s}{2}} \Gamma(1-s)}{4 \log \varepsilon} \sum_{m \in \mathbb{Z}}
    \frac{\Gamma(\frac{s}{2} - \frac{\pi i m}{2 \log \varepsilon})}
    {\Gamma(1-\frac{s}{2} - \frac{\pi i m}{2 \log \varepsilon})}.
  \end{equation*}
  The polar behavior is visible from this expression: $\Zeven(s)$ has simple
  poles at $s= \frac{\pi i m}{\log \varepsilon} - 2k$ for any $m \in
    \mathbb{Z}$ and integral $k \geq 0$.
  (The poles when $k = 0$ can be recognized through limiting behavior from the
  left.)
\end{theorem}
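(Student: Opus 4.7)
The plan is to apply Mordell's truncated Poisson summation (Lemma~\ref{lem:Mordell}) to a regularized version of $\Zeven(s)$, working initially in a strip where all integrals converge and then extending by analytic continuation. The summand $g_s(x) = (\varepsilon^{2x}-\varepsilon^{-2x})^{-s}$ is not Schwartz on $[0,\infty)$: it blows up like $(4x\log\varepsilon)^{-s}$ at $x=0$ and, unlike the odd case, is not symmetric under $x \mapsto -x$. To remedy the singularity, I would subtract off exactly this model and work with $f(x) := (\varepsilon^{2x}-\varepsilon^{-2x})^{-s} - (4x\log\varepsilon)^{-s}$, which is $O_s(x^{2-\Re s})$ as $x\to 0^+$ and decays exponentially as $x\to\infty$. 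For $\Re s \in (1,2)$ both $f$ and $f''$ are absolutely integrable on $[0,\infty)$, so Lemma~\ref{lem:Mordell} applies on a one-sided interval. The subtracted tail, summed back, contributes $q^{s/2}\zeta(s)/(4\log\varepsilon)^s$ and introduces the first term of~\eqref{eq:Z-even-continuation}.

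Next I would compute the Fourier coefficients of $f$ on $[0,\infty)$. For $m \neq 0$ I would invoke Mordell's closed-form evaluation cited in the excerpt, which yields a gamma ratio plus a term proportional to $(2\pi|m|)^{s-1}$. The $m=0$ coefficient requires more care: the substitution $z=\varepsilon^{-2x}$ turns the integral into $\int_0^1\bigl((1/z-z)^{-s} - (-2\log z)^{-s}\bigr)\,dz/(2z\log\varepsilon)$, for which one can find an antiderivative in terms of $z^s\,{}_2F_1(s,s/2,s/2+1,z^2)$. The lower limit $z=0$ vanishes for $\Re s > 1$, and at the upper limit I would apply the Euler-type transformation (Gradshteyn-Ryzhik 9.131(2)) to split the hypergeometric into a piece analytic at $z=1$ and a piece whose singularity exactly cancels that of $(-2\log z)^{1-s}$, leaving the finite residue $\Gamma(1-s)\Gamma(s/2)/(4\Gamma(1-s/2)\log\varepsilon)$.

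Assembling these ingredients produces~\eqref{eq:Z-even-continuation}, initially valid for $\Re s \in (1,2)$. To extend the region of validity, I would use Stirling's approximation to verify that the gamma-ratio terms and the $|m|^{s-1}$ terms each sum absolutely on the right-hand side throughout $\Re s < 2$ away from poles. Combined with the defining half-plane $\Re s > 0$, this furnishes meromorphic continuation to all of $\mathbb{C}$. To derive the cleaner formula in $\Re s < 0$, I would separate the $m\neq 0$ sum into its two pieces, both of which converge absolutely there. The $(2\pi|m|)^{s-1}$ piece collapses to $-q^{s/2}\zeta(s)/(4\log\varepsilon)^s$ via the functional equation of the Riemann zeta function, exactly canceling the regularization term, while the remaining gamma-ratios absorb the $m=0$ contribution into a single symmetric sum over all $m \in \mathbb{Z}$. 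The pole structure is then transparent: the only poles come from $\Gamma(s/2 - \pi i m/(2\log\varepsilon))$ at its nonpositive integer arguments, located at $s = -2k + \pi i m/\log\varepsilon$, and one checks that these are simple.

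The main obstacle I anticipate is the evaluation of the $m=0$ Fourier coefficient, where the hypergeometric antiderivative and the $(-2\log z)^{1-s}$ term each diverge as $z \to 1^-$ and only their difference has a finite limit; tracking this cancellation requires a careful application of the connection formula for ${}_2F_1$. A secondary subtlety is the justification for identifying the right-hand side of~\eqref{eq:Z-even-continuation} across different subregions of $\Re s < 2$, particularly at $\Re s = 0$ where the $|m|^{s-1}$ sum is only conditionally convergent; Abel summation together with Stirling should suffice.
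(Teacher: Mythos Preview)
Your proposal is correct and follows the paper's proof essentially step for step: the same regularization $f(x)=(\varepsilon^{2x}-\varepsilon^{-2x})^{-s}-(4x\log\varepsilon)^{-s}$, the same one-sided Poisson summation via Lemma~\ref{lem:Mordell} on $\Re s\in(1,2)$, the same Mordell-cited evaluation for $m\neq 0$, the same hypergeometric antiderivative and connection formula for $m=0$, and the same cancellation via the functional equation of $\zeta$ for $\Re s<0$. The one small slip is the claim that the gamma-ratio and $\lvert m\rvert^{s-1}$ pieces \emph{each} converge absolutely for $\Re s<2$; only their combined difference does (as you yourself note at the end), but this does not affect the argument.
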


\begin{remark}
  Unlike $\Zodd$, $\Zeven$ has no trivial zeros.
  Note however that certain values of $\Zeven(s)$ may be computed exactly due to
  simplification of the gamma functions; for example
  \begin{equation}\label{eq:special_value_negone}
    \Zeven(-1)
    = \frac{1}{4 \sqrt{q} \log \varepsilon} \sum_{m \in \mathbb{Z}}
    \frac{\Gamma(-\frac{1}{2} - \frac{\pi i m}{2 \log \varepsilon})}
    {\Gamma(\frac{3}{2} - \frac{\pi i m}{2 \log \varepsilon})}
    = \frac{1+\varepsilon^2}{(1-\varepsilon^2) \sqrt{q}}.
  \end{equation}
  The difference of~\eqref{eq:special_value_negone} and its Galois conjugate is  \begin{equation}
    \frac{1+\varepsilon^2}{(1-\varepsilon^2) \sqrt{q}}
    +
    \frac{1+\overline{\varepsilon}^2}{(1-\overline{\varepsilon}^2) \sqrt{q}}
    =
    \frac{%
      (1 + \varepsilon^2) (1 - \overline{\varepsilon}^2)
      +
      (1 - \varepsilon^2) (1 + \overline{\varepsilon}^2)
    }{
      (1 -  \varepsilon^2) (1 - \overline{\varepsilon}^2) \sqrt{q}
    }
    =
    0,
  \end{equation}
  where we've used that $(\varepsilon \overline{\varepsilon})^2 = 1$ to complete
  the cancellation.
  As $\Zodd(-1) = 0$ trivially, this also gives a closed form for $\Zboth(-1)$.
  Hence $\Zeven(-1)$ and $\Zboth(-1)$ are always rational.

  This generalizes observations from Navas~\cite[\S4]{navas2001fibonacci}, who
  observed that $Z_5^{\textup{odd}}(-1)$ is rational.
  Navas also considered the rationality of $Z_5$ at all other negative
  odd integers, and we remark that his proof of rationality can be adapted for
  the binomial continuation given in Theorem~\ref{thm:binomial-expression}.
\end{remark}

\section{Continuation via Modular Forms}%
\label{sec:modular}

Finally, we explain how $\Zodd$ and $\Zeven$ may be understood via modular forms.
An excellent reference for notions here is~\cite{stromberg_modularformsclassicalapproach}.

Let $r_1(n) = \#\{x \in \Z :  x^2 = n\}$.
Proposition~\ref{prop:pfib} shows that a $\calO_D$ Fibonacci number can
be detected by detecting squares, from which it follows that
\begin{equation}
  \Zodd(s) = \sum_{n \ge 1} \frac{1}{F_D(2n-1)^s}
  = \frac{1}{4} \sum_{n \ge 1} \frac{r_1(n)r_1(qn-4)}{n^{s/2}}.
\end{equation}
When $D \equiv 1 \bmod 4$, we have $q = D$.
Otherwise, we have $q = 4D$ and the last $r_1(\cdot)$ term
in~\eqref{eq:basic_modular} is $r_1(4D - 4) = r_1(D - 1)$ (i.e.\ $4D - 4$ is a
square if and only if $D - 1$ is a square).
Together, this allows us to rewrite $\Zodd(s)$ as
\begin{equation}\label{eq:basic_modular}
  \Zodd(s) = \frac{1}{4} \sum_{n \ge 1} \frac{r_1(n)r_1(Dn-\ell)}{n^{s/2}},
\end{equation}
where $\ell = \ell(D) = 4$ if $D \equiv 1 \bmod 4$ and otherwise $\ell = 1$.

The sequence $\{r_1(n)\}$ gives the coefficients of a modular form and the sum on the right
of~\eqref{eq:basic_modular} is a shifted convolution Dirichlet series.
These are well-studied.
Equation~\eqref{eq:basic_modular} is closely related to the series studied
in~\cite[\S3]{hkldw_3aps}.
Broadly, we obtain the meromorphic continuation for $\Zodd(s)$ via spectral
expansion.

Let $\theta(z) = \sum_{n \ge 0} r_1(n) e^{2 \pi i n z} \in S_{1/2}(\Gamma_0(4))$ denote
the classical theta function, and consider
$V_1(z) = \Im(z)^{1/2} \theta(Dz) \overline{\theta(z)}$.
Then $V_1(z)$ is an automorphic form of weight $0$ for $\Gamma_0(4D)$ with nebentypus
$\chid = \left( \frac{4D}{\cdot} \right)$.
For a matrix $\gamma = \left( \begin{smallmatrix} a & b \\ c & d
\end{smallmatrix} \right) \in \Gamma_0(4D)$, we define $\chid(\gamma) =
\chid(d)$.
For $h \ge 1$, let $P_h(z, s)$ denote the Poincar{\'e} series
\begin{equation} \label{eq:P-h-definition}
  P_h(z, s) \colonequals P_h(z, s; \chid) \colonequals
  \frac{1}{2} \sum_{\gamma \in \Gamma_{\infty} \backslash \Gamma_0(4D)}
  \Im(\gamma z)^s e^{2 \pi i h \gamma z} \chid(\gamma),
\end{equation}
where $\Gamma_\infty \subset \Gamma_0(4D)$ denotes the stabilizer of the cusp at $\infty$.
Then $P_h$ is also an automorphic form of weight $0$ for $\Gamma_0(4D)$ with
nebentypus $\chid$.
By taking the Petersson inner product of $V_1$ with $P_\ell$ (where $\ell =
  \ell(D)$ is $1$ or $4$, as described above), we obtain a
completed version of $\Zodd$:
\begin{equation}\label{eq:spectral_base}
  4 \Gamma(s) \Zodd(2s)
  = (4D \pi)^{s}
  \bigl\langle V_1, P_\ell(\cdot, \sbar + \tfrac{1}{2}) \bigr\rangle.
\end{equation}

The Poincar\'e series $P_\ell(\cdot, s)$ is a cusp form and has meromorphic
continuation to $\mathbb{C}$ (see for
instance~\cite{iwaniecclassical97}).
As $P_\ell$ decays rapidly at the cusps of $\Gamma_0(4D) \backslash
  \mathcal{H}$, this gives an immediate (but unwieldy) meromorphic continuation
of $\Zodd$.

\begin{theorem}\label{thm:modular}
  The Dirichlet series $\Zodd(s)$ has meromorphic continuation to all
  $s \in \mathbb{C}$, given by
  \begin{equation}
    \Zodd(2s)
    =
    \frac{(4D \pi)^{s}}
    {4 \Gamma(s)}
    \bigl\langle V_1, P_\ell(\cdot, \sbar + \tfrac{1}{2}; \chid) \bigr\rangle.
  \end{equation}
\end{theorem}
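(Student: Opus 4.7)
The plan is to establish the explicit identity in \eqref{eq:spectral_base}, from which Theorem~\ref{thm:modular} follows immediately via the well-known meromorphic continuation of the Poincaré series $P_\ell(\cdot,s)$ (as recorded in Iwaniec's \emph{Spectral Methods of Automorphic Forms}). All of the work therefore lies in a single Rankin--Selberg-style unfolding.

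First I would confirm the shifted convolution representation \eqref{eq:basic_modular}. Proposition~\ref{prop:pfib} identifies odd-indexed $\calO_D$ Fibonacci numbers as positive integers $n$ such that $qn^2 - 4$ is a perfect square. Using $r_1$ as a square-indicator function and handling the parity remark for $q = 4D$ (so that $4D\cdot n^2 - 4$ is a square iff $Dn^2 - 1$ is), one obtains the shifted convolution form with shift parameter $\ell = \ell(D)$, with a factor of $\tfrac{1}{4}$ accounting for the four sign choices of $(\pm X, \pm n)$.

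Next comes the core computation. Take $\Re s$ sufficiently large so that both the Poincaré series and the shifted convolution series converge absolutely, and unfold $\overline{P_\ell(\cdot, \sbar+\tfrac12)}$ against a fundamental domain for $\Gamma_0(4D)\backslash\mathcal{H}$. Since the nebentypus of $V_1$ matches that of $P_\ell$, standard unfolding replaces the domain by $\Gamma_\infty \backslash \mathcal{H} = [0,1] \times (0,\infty)$ and reduces the inner product to
\begin{equation}
\langle V_1, P_\ell(\cdot,\sbar+\tfrac12)\rangle
= \int_0^\infty \!\! \int_0^1 y^{1/2}\theta(Dz)\overline{\theta(z)}\, y^{s+1/2} e^{-2\pi i \ell x} e^{-2\pi \ell y}\,\frac{dx\,dy}{y^2}.
\end{equation}
Expanding $\theta(Dz) = \sum_m r_1(m) e^{2\pi i m D z}$ and $\overline{\theta(z)} = \sum_n r_1(n) e^{-2\pi i n \bar z}$, the $x$-integral enforces $n = Dm - \ell$, collapsing the double sum to the shifted convolution. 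The $y$-integral then reduces to $\int_0^\infty y^{s-1} e^{-4\pi D m y}\,dy = \Gamma(s)/(4\pi Dm)^s$, and collecting constants yields exactly \eqref{eq:spectral_base} after rewriting via \eqref{eq:basic_modular}.

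The main obstacle is the bookkeeping around the half-integral weight: one must verify that $\theta(Dz)\overline{\theta(z)}$, multiplied by $\Im(z)^{1/2}$, is genuinely an automorphic form of weight $0$ with the nebentypus $\chi_{4D}$ claimed for $P_\ell$, so that unfolding is legitimate on the full group $\Gamma_0(4D)$. Once the identity is established in a half-plane of absolute convergence, meromorphic continuation of the right-hand side (inherited entirely from $P_\ell$) propagates the equality across $\mathbb{C}$ by the identity principle, giving the stated continuation of $\Zodd(2s)$. A secondary routine check is that the $\Gamma(s)^{-1}$ factor does not introduce spurious poles, which is immediate since the obvious trivial zeros of $\Gamma(s)^{-1}$ at non-positive integers are absorbed into the pole structure of the Poincaré series.
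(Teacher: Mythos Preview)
Your proposal is correct and follows essentially the same approach as the paper: derive the shifted convolution expression~\eqref{eq:basic_modular} from Proposition~\ref{prop:pfib}, recognize it as the Petersson inner product $\langle V_1, P_\ell(\cdot,\sbar+\tfrac12)\rangle$ via a standard Rankin--Selberg unfolding, and then invoke the known meromorphic continuation of the Poincar\'e series. You actually supply more detail than the paper, which simply asserts~\eqref{eq:spectral_base}; your explicit unfolding (the $x$-integral forcing $n=Dm-\ell$ and the $y$-integral producing $\Gamma(s)/(4\pi Dm)^s$) is exactly what underlies that identity, and your flagged obstacle---checking that $V_1$ and $P_\ell$ share the nebentypus $\chi_{4D}$ on $\Gamma_0(4D)$---is the right thing to verify.
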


\subsection{A sketch of how to understand this continuation}

To make sense of this continuation, we can use spectral theory to get a
spectral resolution of $\Zodd$.
As $P_\ell \in L^2(\Gamma_0(4D)\backslash \mathcal{H}; \chid)$, we can directly expand the Poincar\'e
series across its discrete and continuous spectra.
In the rest of this section, we briefly sketch how to understand the
meromorphic behavior from Theorem~\ref{thm:modular}.

Each function in $L^2(\Gamma_0(4D)\backslash \mathcal{H}; \chid)$ has a
spectral expansion in terms of a basis of Maass forms $\{ \mu_j \}$ and a basis
of Eisenstein series $\{ \mathcal{E}_\mathfrak{a} \}$ (cf.~\cite[Prop. 4.1,
  4.2]{dfi}).
For the Poincar\'e series $P_\ell$, this takes the form
\begin{align}
  \label{eq:P4-spectral expansion}
  P_\ell(z, s)
   & =
  \sum_j
  \bigl\langle P_\ell(\cdot, s), \mu_j \bigr\rangle \mu_j(z)
  \\
   & \!\! +
  \sum_{\mathfrak{a}} \frac{1}{4 \pi}
  \int_{-\infty}^{\infty}
  \bigl\langle
  P_\ell(\cdot, s),
  \mathcal{E}_\mathfrak{a}(\cdot, \tfrac{1}{2} + it)
  \bigr\rangle
  \mathcal{E}_\mathfrak{a}(z, \tfrac{1}{2} + it) dt,
\end{align}
where $\mathfrak{a}$ ranges over cusps of $\Gamma_0(4D)$ that are singular with
respect to $\chid$.

Before inserting this expansion into~\eqref{eq:spectral_base},
it is easier to work with $V_1$ after subtracting an Eisenstein series $E(z)$
that cancels the growth at the cusps.
We call this function $V(z) = V_1(z) - E(z)$.
Now inserting $V_1(z) = V(z) + E(z)$ into~\eqref{eq:spectral_base}, we
find
\begin{equation} \label{eq:regularized-expansion}
  4 \Gamma(s) \Zodd(2s)
  =
  (4 \pi D)^s \bigl\langle V, P_\ell(\cdot, \overline{s}+\tfrac{1}{2}) \bigr\rangle
  +
  (4 \pi D)^s \bigl\langle E, P_\ell(\cdot, \overline{s}+\tfrac{1}{2}) \bigr\rangle.
\end{equation}
For $\Re s$ sufficiently large, we may now use the spectral expansion~\eqref{eq:P4-spectral expansion} to produce the following spectral expansion for $\Zodd(2s)$:
\begin{align}
  \frac{4 \Gamma(s) \Zodd(2s)}{(4\pi D)^s}
   &
  =
  \bigl\langle E, P_\ell(\cdot, \overline{s}+\tfrac{1}{2}) \bigr\rangle
  +
  \sum_j \langle P_\ell(\cdot, \overline{s}+\tfrac{1}{2}), \mu_j \rangle
  \langle V, \mu_j \rangle
  \\
   & \quad
  + \sum_{\mathfrak{a}} \frac{1}{4 \pi}
  \int_{-\infty}^{\infty}
  \bigl\langle
  P_\ell(\cdot, \overline{s}+\tfrac{1}{2}),
  \mathcal{E}_\mathfrak{a}
  \bigr\rangle
  \bigl\langle
  V,
  \mathcal{E}_\mathfrak{a}(\cdot, \tfrac{1}{2} + it)
  \bigr\rangle
  dt.
\end{align}
Following~\cite{hkldw_3aps}, the inner products $\langle V, \mu_j \rangle$ and
$\langle V, \mathcal{E}_\mathfrak{a} \rangle$ can be explicitly computed and
the remaining inner products can be bounded.
This is enough to show that $\Zodd$ has a meromorphic continuation in terms of
the spectrum on $L^2(\Gamma_0(4D) \backslash \mathcal{H} ; \chid)$.

It is not obvious from this expansion whether the meromorphic continuation that
results from spectral resolution resembles the continuations from direct
binomial expansion (Theorem~\ref{thm:binomial-expression}) or Poisson summation
(Theorem~\ref{thm:continuation-Poisson-odd}).
In a sequel paper, we will show that the continuous spectrum and almost all of
the discrete spectrum vanishes, leaving only contributions from dihedral forms.
These components are explicitly computable.
Surprisingly, carrying out these computations ultimately gives the exact same
meromorphic continuation as from Poisson summation, stated in
Theorem~\ref{thm:continuation-Poisson-odd}.
See forthcoming work~\cite{akldwFibonacciModular} for more.

\begin{remark}
  Identical reasoning gives the series for $\Zeven$:
  \begin{equation}
    \Zeven(s) = \frac{1}{4} \sum_{n \ge 1} \frac{r_1(n)r_1(Dn+\ell)}{n^{s/2}}
  \end{equation}
  and the continuation
  $
    4 \Gamma(s) \Zeven(2s)
    = (4D \pi)^{s}
    \bigl\langle V_1, P_{-\ell}(\cdot, \sbar + \tfrac{1}{2})
    \bigr\rangle
  $. Although superficially similar, it is necessary to incorporate work
  from~\cite[\S2]{hoffsteinhulse13} to make sense of the spectral
  decomposition.
  This mirrors the additional technical hurdles in deducing the
  continuation of $\Zeven$ using Poisson summation, and again the continuation
  from modular forms perfectly matches the continuation from (even) Poisson
  summation (Theorem~\ref{thm:poisson_even}).

  It is not apparent to the authors why the meromorphic continuations coming
  from modular forms perfectly match those coming from Poisson summation.
\end{remark}

\bibliographystyle{alpha}
\bibliography{bibfile}

\end{document}